\newtheorem{defi}{Definition}
\newtheorem{thr}[defi]{Theorem}
\newtheorem{lem}[defi]{Lemma}
\newtheorem{obs}[defi]{Observation}
\newtheorem{claim}[defi]{Claim}
\newcommand*{\myproofname}{Proof}
\newenvironment{claimproof}[1][\myproofname]{\begin{proof}[#1]}{\end{proof}}
\def\C{\mathcal{C}}
\def \d{\, \mathrm{d}}
\def\vc{\overrightarrow}
\title{Resolution of Yan's conjecture on entropy of graphs}
\author{
Stijn Cambie\thanks{This author was unemployed when performing this research.
E-mail: {\tt stijn.cambie@hotmail.com}.}\and
Matteo Mazzamurro\thanks{
Department of Computer Science,
University of Warwick, UK. E-mail: {\tt  matteo.mazzamurro@warwick.ac.uk}. This paper is partly funded by: EPSRC Centre for Doctoral Training in Urban Science and Progress (EP/L016400/1), EPSRC DTP (EP/N509796/1).}
}
\begin{document}

\maketitle

\begin{abstract}
     The first degree-based entropy of a graph is the Shannon entropy of its degree sequence normalized by the degree sum. In this paper, we characterize the connected graphs with given order $n$ and size $m$ that minimize the first degree-based entropy whenever $n-1 \le m \le 2n-3,$ thus extending and proving a conjecture by Yan.
\end{abstract}

\section{Introduction}

The first-degree based graph entropy and the Shannon entropy of other graph invariants have attracted significant attention in organic chemistry, as measures of uniformity of a graph's structural aspect of interest \cite{mowshowitz1968entropy,bonchev1983chemical,dehmer2009entropy}. Although Shannon entropy is conceptually and computationally simple, its careful and context-informed normalisation and interpretation poses some challenges. Determining the range of values the Shannon entropy of a graph invariant can take is a non-trivial task as it may depend on the presence of structural constraints on the graph \cite{dehmer2012extremal}. Solving this issue requires first and foremost the identification of the measure's extremal values for graphs satisfying natural constraints.

In~\cite{CM22+}, we determined the minimum first degree-based entropy among all graphs with a given size. Here the extremal graphs are precisely the colex graphs. In this paper, we do so for connected graphs with given size $m$ and order $n$, which we call $(n,m)$-graphs, for the case when $n-1 \leq m \leq 2m-3$. 
This problem was first presented by Yan \cite{yan2021topological}, who conjectured that the degree sequence of the graph minimising the first-degree based graph entropy when $m\geq n+9$ is $(n-1,m-n+2,2^{m-n+1},1^{2n-m-3}).$ Here we refine this conjecture, first of all by noticing that such degree sequence is only possible when $m\leq 2n-3$, and then showing how, with some adjustments, the conjecture can be extended to the range $n-1\leq m\leq 2n-3$. We finally proceed to prove it. The extremal graphs, i.e. the graphs minimizing the entropy among all $(n,m)$-graphs, are presented in Table~\ref{tbl:overview}.

	\begin{table}[h!]
		\centering
		\begin{tabular}{ | m{3cm} | m{7cm} | }
			\hline
			&\\
			\begin{minipage}[t]{3cm}
				$m=n-1+a$\\
				$0 \le a \le n-2$\\
				$a \not \in \{3,5,6\}$
			\end{minipage}
			&\begin{minipage}[t]{7cm}
				\centering
				\begin{tikzpicture}{
					
					\foreach \x in {0,-0.4,1,1.6,2.4,2.8}{\draw[thick] (\x,2) -- (0.6,0);}
					\draw[thick] (2,0) -- (0.6,0);
					\foreach \x in {1.6,2.4,2.8}{\draw[thick] (\x,2) -- (2,0);}
					\draw[dotted] (-0.4,2) -- (1,2);
					\draw[dotted] (1.6,2) -- (2.8,2);
					\foreach \x in {0,-0.4,1,1.6,2.4,2.8}{\draw[fill] (\x,2) circle (0.1);}
					\foreach \x in {0.6,2}{\draw[fill] (\x,0) circle (0.1);}
					
					\coordinate [label=center:\large \textbf{$a$}] (A) at (2.2,2.3);
					\coordinate [label=center:\large \textbf{$n-2-a$}] (A) at (0.3,2.3);
					
				}
				\end{tikzpicture}	
			\end{minipage}
			\\
			&\\
			\hline
			
			&\\
			\begin{minipage}[t]{3cm}
				$m=n+2$\\
			\end{minipage}
			&\begin{minipage}[t]{7cm}
				\centering
				\begin{tikzpicture}{

					\foreach \x in {0.1,-0.3,1.1}{\draw[thick] (\x,1) -- (0.6,0);}
					\draw[thick] (0.6,0) -- (1.6,0)--(1.6,-1)--(0.6,-1) -- cycle;
					\draw[thick] (0.6,0) --(1.6,-1);
					\draw[thick] (1.6,0)--(0.6,-1);
					\draw[dotted] (-0.3,1) -- (1,1);
					
					\foreach \x in {0.1,-0.3,1.1}{\draw[fill] (\x,1) circle (0.1);}
					\foreach \x in {0.6,1.6}{\draw[fill] (\x,0) circle (0.1);}
					\foreach \x in {0.6,1.6}{\draw[fill] (\x,-1) circle (0.1);}
					
					\coordinate [label=center:\large \textbf{$n-4$}] (A) at (0.4,1.3);
					
				}
				\end{tikzpicture}	
			\end{minipage}
			\\
			&\\
			\hline
			
			&\\
			\begin{minipage}[t]{3cm}
				$m=n+4$\\
			\end{minipage}
			&\begin{minipage}[t]{7cm}
				\centering
			
					\begin{tikzpicture}{
						\foreach \x in {0.1,-0.3,1.1}{\draw[thick] (\x,1) -- (0.6,0);}
						\draw[thick] (1.4,-1.4) --(0.6,-1)--(0.6,0) -- (1.6,0)--(2,-0.8);
						\draw[thick] (0.6,0) --(2,-0.8) --(0.6,-1)-- (1.6,0)--(1.4,-1.4) -- cycle;
						\draw[dotted] (-0.3,1) -- (1,1);
						
						\foreach \x in {0.1,-0.3,1.1}{\draw[fill] (\x,1) circle (0.1);}
						\foreach \x in {0.6,1.6}{\draw[fill] (\x,0) circle (0.1);}
						\draw[fill] (0.6,-1) circle (0.1);
						\draw[fill] (1.4,-1.4) circle (0.1);
						\draw[fill] (2,-0.8) circle (0.1);
						
						\coordinate [label=center:\large \textbf{$n-5$}] (A) at (0.4,1.3);
						
					}
					\end{tikzpicture}	
				 \hspace{0.5 cm}	
				\begin{tikzpicture}{
					
					\foreach \x in {0,-0.4,1,1.6,2.4,2.8}{\draw[thick] (\x,2) -- (0.6,0);}
					\draw[thick] (2,0) -- (0.6,0);
					\foreach \x in {1.6,2.4,2.8}{\draw[thick] (\x,2) -- (2,0);}
					\draw[dotted] (-0.4,2) -- (1,2);
					\draw[dotted] (1.6,2) -- (2.8,2);
					\foreach \x in {0,-0.4,1,1.6,2.4,2.8}{\draw[fill] (\x,2) circle (0.1);}
					\foreach \x in {0.6,2}{\draw[fill] (\x,0) circle (0.1);}
					
					\coordinate [label=center:\large \textbf{$5$}] (A) at (2.2,2.3);
					\coordinate [label=center:\large \textbf{$n-7$}] (A) at (0.3,2.3);
					
				}
				\end{tikzpicture}	
			\end{minipage}
			\\
			&\\
			\hline
			
			&\\
			\begin{minipage}[t]{3cm}
				$m=n+5$\\
			\end{minipage}
			&\begin{minipage}[t]{7cm}
				\centering
				\begin{tikzpicture}{
					\foreach \x in {0.1,-0.3,1.1}{\draw[thick] (\x,1) -- (0.6,0);}
					\draw[thick] (0.6,0) -- (1.6,0)--(2,-0.8)--(1.4,-1.4) --(0.6,-1)-- cycle;
					\draw[thick] (0.6,0) --(2,-0.8) --(0.6,-1)-- (1.6,0)--(1.4,-1.4) -- cycle;
					\draw[dotted] (-0.3,1) -- (1,1);
					
					\foreach \x in {0.1,-0.3,1.1}{\draw[fill] (\x,1) circle (0.1);}
					\foreach \x in {0.6,1.6}{\draw[fill] (\x,0) circle (0.1);}
					\draw[fill] (0.6,-1) circle (0.1);
					\draw[fill] (1.4,-1.4) circle (0.1);
					\draw[fill] (2,-0.8) circle (0.1);
					
					\coordinate [label=center:\large \textbf{$n-5$}] (A) at (0.4,1.3);
					
				}
				\end{tikzpicture}	
			\end{minipage}
			\\
			&\\
			\hline
		\end{tabular}
		\caption{Overview of extremal $(n,m)$-graphs minimizing the entropy}\label{tbl:overview}
	\end{table}

Let us now start by formally defining the measure of interest. Here the logarithm will always denote the natural logarithm.

\begin{defi}
    The first degree-based entropy of a graph $G$ with degree sequence $(d_i)_{1 \le i \le n}$ and size $m$ equals
    $$I(G)=-\sum_{i=1}^n \frac{d_i}{2m}\log\left( \frac{d_i}{2m} \right).$$
\end{defi}

If we let $f(x)=x \log(x)$ 
    and $h(G)=\sum_i f(d_i)=\sum_i d_i \log(d_i)$,
    then we have $I(G)=\log(2m)-\frac{1}{2m}h(G).$
    Thus, determining the minimum of $I(G)$ is equivalent to determining the maximum of $h(G).$

By~\cite[Theorem~4]{yan2021topological}, we know that the graph maximizing $h(G)$ among all $(n,m)$-graphs is a threshold graph. This implies in particular that the graph has a universal vertex $v$ with degree $n-1$.
Now $G \backslash v$ is a $(m-n+1,n-1)$-graph.
Taking into account that $d_G(u)=d_{G \backslash v}(u)+1$ for every vertex $u \in V \backslash v,$ we note that it is sufficient to find the $(m-n+1,n-1)$-graph maximizing $h_1(G)$, where $h_1(G)$ is formed by taking into account that the original degrees are larger by one.
We extend this idea towards the setting where there are $c$ universal vertices initially.
Then, we compute the extremal graphs maximizing the related function $h_c(G)$ given only the size (and fixed large order essentially, as explained in Subsection~\ref{subsec:prelimanaryresults}).
We do so by induction. In Section~\ref{sec:extr_graphs_small_size} we compute the extremal graphs for small size. These are the base cases for the induction.
Then by taking a vertex of minimum degree and relating $h_c(G)$ with $h_c(G \backslash v)$,
we perform the induction in Section~\ref{sec:extr_graphs_hc_given_m}. 
Besides a few exceptions, the extremal graphs turn out to be the star, contrary to the extremal graphs for $h(G)$ when only the graph size is given, for which the extremal graphs are colex graphs, see~\cite{CM22+}. 
The precise statement is formulated in Theorem~\ref{thr:main_extr_h_c}.
At the end of the section, in Subsection~\ref{subsec:min_entropy_given(n,m)}, we apply Theorem~\ref{thr:main_extr_h_c} to characterize the graphs minimizing the entropy among $(n,m)$-graphs when $n-1 \le m \le 2n-3$, thus proving an extended version of the conjecture formulated by Yan \cite[Conj.~6]{yan2021topological}.

The main ideas of the proof are given in Section~\ref{sec:extr_graphs_hc_given_m}. Some necessary tools and computations are gathered in Subsection~\ref{subsec:prelimanaryresults} and Section~\ref{sec:comp_lemmas}.

\subsection{Definitions}\label{subsec:prelimanaryresults}

In this paper, we will express the entropy in terms of other functions and use help functions in the computations. These are defined here.

\begin{defi}
    For any constant $c \ge0,$ we define the function $f_c(x)=(x+c)\cdot \log (x+c).$
    For a graph $G$ with degree sequence $(d_i)_{1 \le i \le n}$, we define $h_c(G)=\sum_i f_c(d_i).$
    When $c=0$, we just write $h(G)$ for $h_0(G)=\sum_i d_i \log(d_i).$
\end{defi}

When $c\ge 2$, the function $h_c(G)$ depends on the number of vertices as well, since isolated vertices contribute $f_c(0)=c \log c >0.$ 
Thus, we will compare graphs with a different order by extending the order, i.e. add isolated vertices in such a way that the graphs have the same order.
We could have defined $h_c^N(G)=\sum_i f_c(d_i) + (N-n)f_c(0)$ to do so, but preferred to keep the notation light. 

We remark here that it will be sufficient to focus on connected graphs.

\begin{obs}
	When omitting the isolated vertices, the graph maximizing $h_c(G)$ among all graphs of size $m$ is a connected graph. For this, note that identifying two vertices in different components with strictly positive degrees $d_u, d_v$ leads to an increase of the value $h_c(G)$ since $f_c$ is a strictly convex function, i.e. $f_c(d_u+d_v)+f_c(0)> f_c(d_u)+f_c(d_v).$
\end{obs}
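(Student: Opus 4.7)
The plan is to argue by contradiction via a local modification. Suppose some disconnected graph $G$ of size $m$, with no isolated vertex, maximises $h_c$ among graphs of size $m$. Choose distinct components $C_1, C_2$ of $G$ and vertices $u \in C_1$, $v \in C_2$; since $G$ has no isolated vertex, $d_u, d_v \geq 1$. I would form $G'$ by identifying $u$ and $v$ into a single vertex $w$ of degree $d_u+d_v$. Because $u$ and $v$ lie in different components, no edge between them existed, so no loop or multi-edge is created and $|E(G')| = m$.

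Using the extension convention $h_c^N$ from Subsection~\ref{subsec:prelimanaryresults} to compare graphs of different orders, the degree multisets of $G$ and $G'$ differ only by replacing $d_u, d_v$ with $d_u+d_v, 0$, so
\[
h_c^N(G') - h_c^N(G) \;=\; f_c(d_u+d_v) + f_c(0) - f_c(d_u) - f_c(d_v).
\]
The remaining task is to show this quantity is strictly positive whenever $d_u, d_v \geq 1$. By strict convexity of $f_c$, the function $t \mapsto f_c(d_u + t) - f_c(t)$ has derivative $f_c'(d_u+t) - f_c'(t) > 0$ and is therefore strictly increasing; evaluating it at $t = d_v$ and $t = 0$ yields exactly the desired inequality. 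Equivalently, the pair $(d_u+d_v, 0)$ strictly majorises $(d_u, d_v)$ when $d_u, d_v \geq 1$, so Karamata's inequality applied to the strictly convex $f_c$ gives the same conclusion. This contradicts the maximality of $G$.

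I anticipate no real obstacle: the argument reduces to a single one-variable convexity inequality. The only points of care are adopting the standard convention $0 \log 0 = 0$ in the $c = 0$ case, and noting that the identification preserves simplicity precisely because $u$ and $v$ lie in distinct components (so no edge is lost and no parallel edge arises).
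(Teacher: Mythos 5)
Your argument is correct and is essentially the same as the paper's own justification: the observation itself already contains the key step of identifying two vertices in distinct components and invoking the strict convexity of $f_c$ to get $f_c(d_u+d_v)+f_c(0)>f_c(d_u)+f_c(d_v)$. Your additional details (preservation of the edge count, the increasing-difference/majorization formulation of the convexity inequality, and the bookkeeping via $h_c^N$) merely flesh out what the paper leaves implicit.
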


In some proofs, we will also make use of the following function.

\begin{defi}\label{defi:Delta_c}
   The function $\Delta_c$ is defined by $\Delta_c(x)=f_c(x)-f_c(x-1)=1+\int_{x+c-1}^{x+c} \log t \, \mathrm{d}t$.
\end{defi}
Note that $\Delta_c$ is a strictly concave, increasing function.

\section{Extremal graphs for small size}\label{sec:extr_graphs_small_size}

In this section, we compute the extremal graphs maximizing $h_1(G)$ for $m \le 10$ and for $h_c(G)$ with $c \ge 2$ for $m\le 6.$ 

\begin{lem}\label{lem:c=1_basecases}
    For $m \le 10$, among all graphs with $m$ edges, we have that $h_1(G)$ is maximized by
    $$G = \begin{cases}
K_{1,m}=S_{m+1} & \mbox{ if } m \not\in \{3,4,6\} \\
K_3 & \mbox{ if } m=3\\
K_4^-=\C(5,3) \mbox{ and }S_6& \mbox{ if } m=5\\
K_4 & \mbox{ if } m=6.\\
\end{cases}$$
\end{lem}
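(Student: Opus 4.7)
My plan is to argue by enumeration, shrinking the set of candidates via convexity and then evaluating $h_1$ on the survivors. By the Observation preceding the lemma, any maximizer of $h_1$ among graphs of size $m$ (after deleting isolated vertices) is connected, so I may restrict attention to connected simple graphs $G$ with exactly $m$ edges; in particular $n \le m+1$. Since $h_1(G)$ depends only on the degree sequence of $G$, the problem reduces to maximizing $\sum_i f_1(d_i)$ over graphical sequences $(d_1,\dots,d_n)$ with $d_i \ge 1$ and $\sum_i d_i = 2m$.

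The first simplification is a majorization argument. Because $f_1$ is strictly convex and $f_1(0)=0$, padding with zeros does not change $h_1$, and if one graphical sequence majorizes another then it yields a strictly larger $h_1$. I therefore restrict attention to graphical sequences that are maximal under majorization among those with $\sum d_i=2m$ and length at most $m+1$. The candidates that survive this filter are: the star $K_{1,m}$, whose sequence $(m,1^m)$ is maximal among trees; the complete graph $K_r$ when $m=\binom{r}{2}$, i.e.\ for $m\in\{3,6,10\}$; the graph $K_4^-$ with sequence $(3,3,2,2)$ for $m=5$; and, for each of $m\in\{4,7,8,9,10\}$, a handful of additional dense competitors (paw, book-like graphs, $K_4$ with pendants, $K_5^-$ with a pendant, and similar), which I enumerate by fixing $n$ from $2$ up to $m+1$ and applying Erd\H{o}s--Gallai to discard non-graphical sequences.

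For each surviving candidate I compute $h_1$ in closed form and compare against $h_1(K_{1,m})=(m+1)\log(m+1)+2m\log 2$. The three anomalies reduce to clean inequalities: $h_1(K_3)=9\log 3 > 14\log 2 = h_1(K_{1,3})$, which follows from $3^9=19683>16384=2^{14}$; the exact identity $h_1(K_4^-)=16\log 2+6\log 3=6\log 6+10\log 2=h_1(K_{1,5})$, which explains the tie between $K_4^-$ and $S_6$ at $m=5$; and $h_1(K_4)=32\log 2 > 7\log 7+12\log 2 = h_1(K_{1,6})$, which follows from $2^{20}=1048576>823543=7^7$. For the remaining values of $m$ all numerical comparisons favour the star, because from $m=7$ onwards the gain from concentrating weight on a single vertex of degree $m$ outgrows what any bounded-size dense cluster can contribute; already for $m=10$, $K_5$ comes within roughly $4\times 10^{-3}$ of $K_{1,10}$ but remains strictly below it.

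The main obstacle I anticipate is exhaustiveness, namely being sure that the shortlist of majorization-maximal graphical sequences is complete for $m\in\{7,8,9,10\}$, where several plausible competitors arise. I would discharge this by a short case split on $n$, using Erd\H{o}s--Gallai as a graphicality filter and using the majorization step to prune any sequence dominated by a candidate already on the list. Once the shortlist is fixed, the remaining proof is a short sequence of numerical verifications of the type carried out above.
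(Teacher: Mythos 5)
Your proposal is correct and takes essentially the same route as the paper: the paper also reduces the lemma to a finite check over all graphical partitions of $2m$ (which it delegates entirely to a short computer program), and your Karamata/majorization pruning is exactly the device the paper itself uses in the analogous base-case lemma for $c\ge 2$. The only point needing care is that the shortlist of majorization-maximal graphical sequences for $m\in\{7,8,9,10\}$ must actually be written out and checked (the $m=10$ comparison with $K_5$ is tight, as you note), but that is a routine finite verification, and your hand computations for $m\in\{3,5,6\}$ --- including the exact tie $h_1(K_4^-)=h_1(K_{1,5})=16\log 2+6\log 3$ --- are all correct.
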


\begin{proof}
    A computer program can verify this claim. Since $h_1(G)$ only depends on the degree sequence of the graph, for a given $m\leq 10$, it is enough to list all degree sequences of graphs of size $m$ and then compute $h_1$ for each sequence. To list all degree sequences, it is sufficient to list all integer partitions of $2m$ and then establish which of these are valid degree sequences using one of several existing criteria (see, e.g., \cite{sierksma1991seven}). For example, one can use the function \verb|parts()| from the R-package \verb|partitions| \cite{hankin2021partitions} to list all partitions of $2m$ and check which ones are degree sequences using \verb|is_graphical()| from the R-package \verb|igraph| \cite{csardi2006igraph} (see Appendix \ref{sec:appendix_code}).
\end{proof}

\begin{lem}\label{lem:cge2_basecases}
    For $c\ge 2$ and $m \le 6$, among all graphs with $m$ edges, we have that $h_c(G)$ is maximized by
    $$G = \begin{cases}
K_{1,m} & \mbox{ if } m \not=3 \\
K_3 & \mbox{ if } m=3.\\
\end{cases}$$
Here one has to take into account isolated vertices when comparing graphs with different order.
\end{lem}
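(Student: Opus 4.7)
The plan is to imitate Lemma~\ref{lem:c=1_basecases} while handling the continuous parameter $c \in [2, \infty)$. By the preceding observation we may restrict to connected graphs; under the padding convention, isolated vertices contribute equally to both sides of any comparison, so the effective quantity to maximise is
\[
\tilde h_c(G) \;:=\; \sum_i g_c(d_i), \qquad g_c(x):=(x+c)\log(x+c)-c\log c,
\]
with $g_c(0)=0$. For each $m \le 6$ the list of connected graphical degree sequences of size $m$ is short (e.g.\ for $m=3$: $(3,1,1,1), (2,2,1,1), (2,2,2)$) and can be produced by hand or with the code of Lemma~\ref{lem:c=1_basecases}. The cases $m \le 2$ are immediate; for every remaining $m$ and every non-extremal candidate $G$, the task reduces to verifying
\[
\Phi_G(c) \;:=\; \tilde h_c(H_m) - \tilde h_c(G) \;>\; 0, \qquad c \in [2, \infty),
\]
where $H_m = K_{1,m}$ except $H_3 = K_3$.

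Each $\Phi_G$ is real-analytic in $c$, and I would prove positivity by combining three ingredients: (a) a direct numerical check $\Phi_G(2) > 0$; (b) the Taylor expansion $g_c(d) = d\log c + d + \tfrac{d^2}{2c} - \tfrac{d^3}{6c^2} + O(c^{-3})$, which together with $\sum d_i = 2m$ gives $\Phi_G(c) \sim \frac{Z_1(H_m) - Z_1(G)}{2c}$ as $c \to \infty$, where $Z_1(G) = \sum d_i^2$ is the first Zagreb index; and (c) a sign analysis of
\[
\Phi_G'(c) \;=\; \sum_i \log\!\Bigl(1 + \tfrac{d_i(H_m)}{c}\Bigr) - \sum_i \log\!\Bigl(1 + \tfrac{d_i(G)}{c}\Bigr),
\]
which reduces to comparing the two polynomials $\prod_i (c + d_i(H_m))$ and $\prod_i (c + d_i(G))$. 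For most candidates one has $Z_1(H_m) > Z_1(G)$, so (b) already gives $\Phi_G > 0$ for large $c$, and the monotonicity information from (c) excludes any intermediate zero.

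The main obstacle is the pair of cases in which the leading coefficient in (b) vanishes: $(m, G) = (3, K_{1,3})$, where $Z_1(K_3) = Z_1(K_{1,3}) = 12$, and $(m, G) = (5, K_4^-)$, which already tied $S_6$ at $c = 1$ in Lemma~\ref{lem:c=1_basecases}. Extracting the next term of the expansion gives $\Phi_G(c) \sim \lambda / c^2$ with $\lambda > 0$, so $\Phi_G \to 0^+$ at infinity. For $(3, K_{1,3})$, a polynomial comparison shows $\Phi_G'(c) < 0$ on $[2, \infty)$, so $\Phi_G$ strictly decreases to $0^+$ and hence remains strictly positive; for $(5, K_4^-)$, $\Phi_G'$ has a single sign change on $(1, \infty)$, from positive to negative, so starting from $\Phi_G(1) = 0$ the function rises off zero, attains a unique maximum, and descends to $0^+$ without ever crossing zero. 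Both of these arguments come down to verifying polynomial inequalities in $c$, which is the main technical step of the proof.
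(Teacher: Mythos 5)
Your route is sound and genuinely different from the paper's. The paper first invokes Karamata's inequality (convexity of $f_c$) to discard every degree sequence that is majorized by another degree sequence of the same size; this leaves only two to four candidates for each $m\in\{4,5,6\}$, and each surviving comparison against the star is settled by telescoping the difference into $\Delta_c$-terms and bounding a single integral $\int_{c-1}^{c}\log(\cdot)\,\mathrm{d}t$ by a pointwise polynomial inequality valid for all $t\ge 1$ --- one inequality per candidate, uniform in $c\ge 2$ (with one numerical check at $c=2$ for the $K_4$ comparison, where the pointwise inequality only holds for $t\ge 2$). You instead treat every connected degree sequence separately and certify each $\Phi_G>0$ by gluing together the value at $c=2$, the asymptotics as $c\to\infty$, and the sign of $\Phi_G'$. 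This can be made to work, but it is considerably more labour: without the majorization reduction the list for $m=6$ alone is long, and the step ``monotonicity excludes any intermediate zero'' is only automatic when the polynomial $\prod_i(c+d_i(H_m))-\prod_i(c+d_i(G))$ has at most one sign change on $[2,\infty)$; a $-,+,-$ pattern for $\Phi_G'$ would permit a dip below zero that your endpoint data cannot rule out, so the sign pattern must itself be verified candidate by candidate. Note also that any candidate whose padded degree sequence is majorized by that of $K_{1,m}$ is disposed of by convexity of $f_c$ alone, so your machinery is only ever needed for the few non-majorized sequences --- which is exactly the paper's reduction, and you would do well to run it first.

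One concrete correction: $(m,G)=(5,K_4^-)$ is not a degenerate case for your expansion. We have $Z_1(K_{1,5})=25+5=30$ while $Z_1(K_4^-)=9+9+4+4=26$, so the leading term of $\Phi_{K_4^-}(c)$ is $4/(2c)$, not $\lambda/c^2$. The exact tie $h_1(K_4^-)=h_1(S_6)$ at $c=1$ (both equal $16\log 2+6\log 3$) is a coincidence of the function $f_1$ and does not propagate to the Zagreb indices governing the $c\to\infty$ behaviour. Your subsequent argument for that case still goes through --- it is just easier than you claim. The genuinely degenerate pair is only $(3,K_{1,3})$ versus $K_3$, where $Z_1$ ties at $12$ and your second-order analysis is indeed needed.
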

\begin{proof}
    For $m \in \{1,2\}$ nothing needs to be done, as there is only one connected graph of size $m.$
    When $m=3,$ there are precisely $3$ connected graphs and we observe that 
    $$h_c(P_4)=2f_c(1)+2f_c(2)<h_c(S_4)=3f_c(1)+f_c(3)<h_c(K_3)=3f_c(2)+f_c(0).$$
    The first inequality is true due to the strict convexity of the function $f_c.$
    The second inequality is true since $\Delta_c$ (Definition~\ref{defi:Delta_c}) is strictly concave and thus $\Delta_c(3)+\Delta_c(1)<2\Delta_c(2).$
    
    By the inequality of Karamata, it is sufficient to consider the degree sequences of graphs with size $m$ that are not majorized by the degree sequences of other such graphs.
    With a simple computer program\footnote{\url{https://github.com/StijnCambie/EntropyGraphs/blob/main/ExtrG_h_c_forsmallm.py}}, we verify those.
    
    For $m=4$ and $m=5$, these non-majorized degree sequences are respectively $\vc{v^4_1}=\{4, 1, 1, 1, 1\}, \vc{v^4_2}=\{3, 2, 2, 1, 0\}$ and $\vc{v^5_1}=\{5, 1, 1, 1, 1, 1\}, \vc{v^5_2}=\{4, 2, 2, 1, 1, 0\}, \vc{v^5_3}=\{3, 3, 2, 2, 0, 0\}.$
    For $m=6$, these degree sequences are $\vc{v^6_1}=\{6, 1, 1, 1, 1, 1, 1\}, \vc{v^6_2}=\{5, 2, 2, 1, 1, 1, 0\}, \vc{v^6_3}=\{4, 3, 2, 2, 1, 0, 0\}, \vc{v^6_4}= \{3, 3, 3, 3, 0, 0, 0\}.$
    
    Now we verify that $h_c\left(\vc{d}\right)=\sum_i f_c(d_i)$ is always maximized by the first degree sequence.
    
     For $4 \le m\le 6,$ we have $$h_c\left(\vc{v^m_1}\right)-h_c\left(\vc{v^m_2}\right)= \Delta_c(m)+\Delta_c(1)-2\Delta_c(2)\ge \int_{c-1}^{c} \log\left( (t+1)(t+4) \right) - \log\left( (t+2)^2 \right) \d t > 0.$$ The last inequality is true since $(t+1)(t+4)>(t+2)^2$ whenever $t \ge 1$.
       
      For $m \in \{5,6\}$ we analogously have
        \begin{align*}
            h_c\left(\vc{v^m_1}\right)-h_c\left(\vc{v^m_3}\right)&=
            \Delta_c(m)+\Delta_c(m-1)+2\Delta_c(1)-\Delta_c(3)-3\Delta_c(2)\\
            &\ge \int_{c-1}^{c} \log\left( (t+5)(t+4)(t+1)^2 \right) - \log\left( (t+3)(t+2)^3 \right) \d t \\ &> 0.
        \end{align*} 
        
        The last inequality being true since $(t+5)(t+4)(t+1)^2>(t+3)(t+2)^3$ whenever $t \ge 1$. 
    
    For the final case, we have
        \begin{align*}
            h_c\left(\vc{v^6_1}\right)-h_c\left(\vc{v^6_4}\right)&=\Delta_c(6)+\Delta_c(5)+\Delta_c(4)+3\Delta_c(1)-3\Delta_c(3)-3\Delta_c(2)\\
            &=\int_{c-1}^{c} \log\left( (t+6)(t+5)(t+4)(t+1)^3 \right) - \log\left( (t+3)^3(t+2)^3 \right) \d t \\&> 0.
        \end{align*}
        When $c=2$, this can be computed\footnote{It is approximately $0.0629$}.
        For $c \ge 3,$ this is due to $ (t+6)(t+5)(t+4)(t+1)^3 > (t+3)^3(t+2)^3$ for $t \ge 2.$
        Finally, it is also clear that the extremal degree sequences do correspond with the star $S_{m+1}=K_{1,m}.$
\end{proof}

\section{Graphs maximizing $h_c(G)$ given the size}\label{sec:extr_graphs_hc_given_m}

In this section, we prove the following theorem that gives the precise characterization of extremal graphs for $h_c(G)$ where $c \ge 1$ is an integer (for $c=0$, this was done in~\cite{CM22+}). 

\begin{thr}\label{thr:main_extr_h_c}
    Among all graphs with $m$ edges, we have that $h_1(G)$ is maximized by
    $$G = \begin{cases}
K_{1,m}=S_{m+1} & \mbox{ if } m \not\in \{3,4,6\} \\
K_3 & \mbox{ if } m=3\\
K_4^-=\C(5,3) \mbox{ and }S_6& \mbox{ if } m=5\\
K_4 & \mbox{ if } m=6.\\
\end{cases}$$
For any $c\ge 2$, among all graphs with $m$ edges and $n>m$ vertices, we have that $h_c(G)$ is maximized by
    $$G = \begin{cases}
K_{1,m} & \mbox{ if } m \not=3 \\
K_3 & \mbox{ if } m=3.\\
\end{cases}$$
\end{thr}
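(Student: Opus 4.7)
The plan is to prove Theorem~\ref{thr:main_extr_h_c} by induction on $m$, with the small-$m$ base cases supplied by Lemmas~\ref{lem:c=1_basecases} and~\ref{lem:cge2_basecases}. For a candidate extremal graph $G$ of size $m$, the observation noted earlier lets us assume its non-isolated part is connected. Let $v$ be a vertex of minimum degree $\delta$ in this part and let $u_1,\dots,u_\delta$ be its neighbours. Treating $v$ as an isolated vertex in $G\setminus v$ gives the identity
\[
h_c(G) - h_c(G\setminus v) \;=\; f_c(\delta) - f_c(0) + \sum_{i=1}^\delta \Delta_c(d_{u_i}),
\]
while a direct computation on the two stars (padded to a common order) yields
\[
h_c(K_{1,m}) - h_c(K_{1,m-\delta}) \;=\; \sum_{j=0}^{\delta-1} \Delta_c(m-j) + \delta\bigl(f_c(1)-f_c(0)\bigr).
\]
Since $G\setminus v$ has $m-\delta$ edges, the induction hypothesis gives $h_c(G\setminus v)\le h_c(K_{1,m-\delta})$ (or its exceptional replacement), so the inductive step reduces to the key inequality
\[
\sum_{j=0}^{\delta-1} \Delta_c(m-j) - \sum_{i=1}^\delta \Delta_c(d_{u_i}) \;\ge\; f_c(\delta) - \delta f_c(1) + (\delta-1)f_c(0).
\]

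The case $\delta=1$ is immediate: the unique neighbour of $v$ satisfies $d_{u_1}\le m$, monotonicity of $\Delta_c$ makes the left-hand side non-negative, and the right-hand side is exactly zero, so combining with the inductive hypothesis yields $h_c(G)\le h_c(K_{1,m})$ with equality characterisation coming from tracking when both bounds are tight. The delicate case is $\delta\ge 2$, in which the right-hand side is a strictly positive constant (by convexity of $f_c$), whereas the naive pointwise bound $d_{u_i}\le m-\delta+1$ makes the left-hand side only of order $\log m/m$ for large $m$. To close this gap, the plan is to use stronger information from the inductive step: either $G\setminus v$ equals the extremal star (which rigidly constrains each $d_{u_i}$ to lie in a small set of possible values, typically $\{1,\,m-\delta\}$), or $h_c(G\setminus v)$ sits strictly below $h_c(K_{1,m-\delta})$ by a margin that survives the step. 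Concretely, for $\delta=2$ one lists the positions of $v$'s two neighbours in $K_{1,m-2}$ (two leaves, or centre plus a leaf) and checks the resulting explicit degree sequences; for $\delta\ge 3$ one uses $\delta\cdot n_{\text{non-iso}}\le 2m$, so that after padding with $f_c(0)$-weighted isolated vertices the graph $G$ cannot keep up with the spread-out star $K_{1,m}$.

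The main obstacle will be the $\delta=2$ sub-case: the inequality
\[
\Delta_c(m)+\Delta_c(m-1) - \Delta_c(d_{u_1}) - \Delta_c(d_{u_2}) \;\ge\; f_c(2) - 2f_c(1) + f_c(0)
\]
cannot be handled with only the bounds $d_{u_i}\le m-1$, since concavity of $\Delta_c$ kills the left-hand side as $m\to\infty$, while the right-hand side is a fixed positive number. The resolution is to exploit the inductive structure: when $G\setminus v$ is close to $K_{1,m-2}$, the possible values of $d_{u_1},d_{u_2}$ are pinned down by the star's structure, and when $G\setminus v$ is far from $K_{1,m-2}$, the inductive bound itself leaves enough slack. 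The computational lemmas in Section~\ref{sec:comp_lemmas}, via the integral form of $\Delta_c$ from Definition~\ref{defi:Delta_c}, reduce each of the resulting explicit inequalities to an elementary polynomial comparison in $t=c$. Finally, the exceptions produced when $m-\delta\in\{3,4,6\}$ (for $c=1$) or $m-\delta=3$ (for $c\ge 2$), where the inductive extremal is $K_3$, $K_4^-$, or $K_4$ rather than a star, are handled by direct verification; since only finitely many such values arise, this adds only a bounded amount of case work.
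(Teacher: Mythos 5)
There is a genuine gap in the $\delta\ge 2$ case, and it stems from a missing combinatorial observation rather than from an intrinsic failure of the inductive approach. You bound each neighbour degree pointwise by $d_{u_i}\le m-\delta+1$, conclude that concavity of $\Delta_c$ ``kills'' the left-hand side of the key inequality as $m\to\infty$, and then propose a stability-type workaround. But the correct move is to bound the \emph{sum} of the neighbour degrees: every edge of $G$ contributes at most once to $\sum_{i=1}^{\delta} d_{u_i}$ unless both endpoints lie in $N(v)$, so $\sum_{i=1}^{\delta} d_{u_i}\le m+\binom{\delta}{2}$. Combined with the concavity of $\Delta_c$ (Jensen), this gives $\sum_i\Delta_c(d_{u_i})\le \delta\,\Delta_c\bigl((m+\binom{\delta}{2})/\delta\bigr)$, i.e.\ the neighbours behave as if they had average degree roughly $m/\delta$, not $m$. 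For $\delta=2$ this makes the left-hand side of your key inequality at least about $2\log 2$ for large $m$, comfortably above the fixed constant $f_c(2)-2f_c(1)+f_c(0)$ on the right; the resulting inequality $LHS(m,b,c)<RHS(m,b,c)$ is exactly what Section~\ref{sec:comp_lemmas} verifies (reduce to the smallest $m$ via Claim~\ref{clm:RHSLHSincreasing}-type monotonicity, then check the boundary cases). Your proposed alternative --- splitting on whether $G\setminus v$ is ``close to'' or ``far from'' $K_{1,m-\delta}$ --- is not carried out, and as stated it cannot be: the induction hypothesis only provides $h_c(G\setminus v)\le h_c(K_{1,m-\delta})$, with no quantitative gap in the ``far'' case, so you would need to strengthen the statement being proved to a stability version, a substantially harder undertaking.

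A second, smaller point: the case work you anticipate for $m-\delta\in\{3,4,6\}$ (where the inductive extremum is $K_3$, $K_4^-$ or $K_4$) is avoidable. Since a minimum-degree vertex of degree $b\ge 2$ forces $m\ge\binom{b+1}{2}$, one has $m-b\ge 7$ once $m\ge 11$ (and $m-b\ge 4$ once $m\ge 7$ for $c\ge 2$); so by pushing the computational base cases up to $m\le 10$ for $c=1$ and $m\le 6$ for $c\ge 2$, the graph $G\setminus v$ always lands in the regime where the extremal graph is a star, and the exceptional graphs never enter the inductive step. Your $\delta=1$ case and the overall induction framework do match the paper's argument.
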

\begin{proof}
Assume we know the extremal graphs with size at most $m-1.$
By Lemmas~\ref{lem:c=1_basecases} and~\ref{lem:cge2_basecases}, this has been done for $m\le 6$ and for $m\le 10$ when $c=1.$
So we assume $m \ge 7$, and even $m \ge 11$ if $c =1.$
Let $G$ be an extremal graph with size $m$ for which the minimum (non-zero) degree is equal to $b.$
The latter implies that there are at least $b+1$ vertices with degree at least $b$ and thus $m \ge \binom{b+1}{2}.$
Let $v$ be a vertex with degree $b$ and let $d_1,d_2,\ldots, d_b$ be the degrees of the neighbours of $v$.

If $b=1$, we have
\begin{align*}
h_c(G)&=h_c(G \backslash v)+ f_c(1)-f_c(0) + \Delta_c(d_1)\\
&\le h_c(K_{1,m-1})+ f_c(1)-f_c(0) + \Delta_c(m)\\
&=h_c(K_{1,m})
\end{align*}
and equality occurs if and only if $G=K_{1,m}.$

Now assume $b \ge 2.$
Note that $\sum_{i=1}^b d_i \le m+\binom b2$ by the analog of the handshaking lemma since every edge which is not part of $G[N(v)]$ can be counted at most once.
Since $\Delta_c$ is strictly concave, we have 
\begin{align*}
    h_c(G)-h_c(G \backslash v) &= f_c(b)-f_c(0)+\sum_{i=1}^b \Delta_c(d_i)\\
    &\le f_c(b)-f_c(0)+b \cdot \Delta_c\left( \frac{m+\binom{b}{2}}{b}\right)\\
    &:=LHS(m,b,c).
\end{align*}
On the other hand, we also have
\begin{align*}
    h_c(K_{1,m})-h_c(K_{1,m-b}) &= f_c(m)-f_c(m-b)+b \Delta_c\left(1\right)\\
    &:=RHS(m,b,c).
\end{align*}
By computations performed in Section~\ref{sec:comp_lemmas}, we know that the first is smaller than the second, i.e. $LHS(m,b,c)<RHS(m,b,c).$
Now $G \backslash v$ has $m-b$ edges, here $m-b\ge 4$ (for $c \ge 2$) and $m-b \ge 7$ (for $c =1$).
Due to Lemmas~\ref{lem:c=1_basecases} and~\ref{lem:cge2_basecases} we have $h_c(G \backslash v) \le h_c(K_{1,m-b}).$

So we conclude that 
\begin{align*}
    h_c(G)&=h_c(G \backslash v) + f_c(b)-f_c(0)+\sum_{i=1}^b \Delta_c(d_i)\\
    &\le h_c(G \backslash v)+f_c(b)-f_c(0)+b \cdot \Delta_c\left( \frac{m+\binom{b}{2}}{b}\right)\\
    &< h_c(K_{1,m-b})+f_c(m)-f_c(m-b)+b \Delta_c\left(1\right)\\
    &=h_c(K_{1,m}).
\end{align*}

By complete induction, we have the whole characterization.
\end{proof}

\subsection{Proof of Yan's Conjecture}\label{subsec:min_entropy_given(n,m)}

We now prove an extended version of Yan's conjecture~\cite[Conj.6]{yan2021topological}.

\begin{thr}
    When $n \le m \le 2n-3$, the extremal $(n,m)$-graph minimizing the entropy is such that, deleting its universal vertex, one obtains the $(n-1,m-n+1)$-graph $G$ described in Theorem \ref{thr:main_extr_h_c} for the $c=1$ case, i.e. the graph with $m'=m-n+1$ edges maximizing $h_1(G).$
    That is, the extremal $(n,m)$-graph has degree sequence
    $$\begin{cases}
(n-1,m-n+2,2^{m-n+1},1^{2n-m-3}) & \mbox{ if } 2n-3 \ge m \ge n+6 \mbox{ or } m \in \{n,n+1,n+3\}, \\
(n-1,4^{4},1^{n-5}) & \mbox{ if }  m = n+5, \\
(n-1,4^{2},3^2,1^{n-5}) \mbox{ or } (n-1,6,2^5,1^{n-7}) & \mbox{ if }  m = n+4, \\
(n-1,3^{3},1^{n-4}) & \mbox{ if }  m = n+2, \\
(n-1,1^{n-1}) & \mbox{ if }  m = n-1. \\
\end{cases}$$
These graphs are presented in Table~\ref{tbl:overview}.
\end{thr}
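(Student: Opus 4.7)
The plan is to reduce the statement directly to Theorem~\ref{thr:main_extr_h_c} with $c=1$ via the universal-vertex decomposition sketched in the introduction. By \cite[Theorem~4]{yan2021topological}, any entropy-minimizing $(n,m)$-graph is a threshold graph, which, being connected and of order $n \ge 2$, contains a universal vertex $v$ of degree $n-1$. Setting $G' := G \setminus v$, I obtain a graph on $n-1$ vertices with $m' := m-n+1$ edges. Because $d_G(u) = d_{G'}(u) + 1$ for every $u \ne v$, the identity
\begin{align*}
h(G) = f(n-1) + \sum_{u \ne v} f\bigl(d_{G'}(u)+1\bigr) = f(n-1) + h_1(G')
\end{align*}
shows that minimizing $I(G) = \log(2m) - h(G)/(2m)$ over $(n,m)$-graphs is equivalent to maximizing $h_1(G')$ over graphs on $n-1$ vertices with $m'$ edges.

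The hypothesis $m \le 2n-3$ translates exactly into $n-1 > m'$, which is the regime in which Theorem~\ref{thr:main_extr_h_c} applies for $c = 1$. Substituting each extremal $G'$ from that theorem into the reconstruction (reintroduce the universal vertex with degree $n-1$, raise by $1$ every other degree of $G'$, and list the remaining $2n-m-3$ isolated vertices of $G'$ as degree-$1$ leaves of $G$), I would verify the claimed degree sequences one by one. The generic choice $G' = K_{1,m'}$ yields $(n-1, m-n+2, 2^{m-n+1}, 1^{2n-m-3})$, covering all sizes outside $\{n+2, n+4, n+5\}$; the triangle $K_3$ at $m'=3$ gives $(n-1, 3^3, 1^{n-4})$ at $m = n+2$; the two extremal options $K_4^-$ and $S_6$ at $m'=5$ give respectively $(n-1, 4^2, 3^2, 1^{n-5})$ and $(n-1, 6, 2^5, 1^{n-7})$ at $m = n+4$; $K_4$ at $m'=6$ gives $(n-1, 4^4, 1^{n-5})$ at $m = n+5$; and the trivial case $m = n-1$ forces $G = K_{1,n-1}$.

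The only non-mechanical step is the opening reduction, namely combining Yan's threshold-graph theorem with the standard observation that a connected threshold graph on $n \ge 2$ vertices has a universal vertex — a direct consequence of the recursive construction in which each such graph is built by iteratively adjoining either an isolated or a dominating vertex, the final connectedness forcing the last added vertex to be dominating. Once this reduction is in place no further optimization is required: the delicate sporadic behavior at $m' \in \{3, 5, 6\}$ has already been absorbed into the base cases of Section~\ref{sec:extr_graphs_small_size} and into the statement of Theorem~\ref{thr:main_extr_h_c}, so the present theorem is a direct corollary.
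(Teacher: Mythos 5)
Your proposal is correct and follows essentially the same route as the paper: invoke Yan's threshold-graph theorem to extract a universal vertex, use the identity $h(G)=f(n-1)+h_1(G\setminus v)$ to reduce to maximizing $h_1$ over graphs with $m-n+1$ edges, and read off the extremal degree sequences from Theorem~\ref{thr:main_extr_h_c}. The extra details you supply (the recursive-construction justification for the universal vertex and the case-by-case reconstruction of the degree sequences) are consistent with, and merely more explicit than, the paper's own proof.
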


\begin{proof}
    By~\cite[Theorem~4]{yan2021topological}, we know that the extremal $(n,m)$-graph is a threshold graph. This implies in particular that it has a universal vertex $v$ with degree $n-1$.
    Now $G'=G \backslash v$ is a $(m-n+1,n-1)$-graph.
    Taking into account that $d_{G}(u)=d_{G'}(u)+1$ for every vertex $u \in V \backslash v,$ we note that 
    $$h(G)=f(n-1)+h_1(G').$$
    Now since $m-n+1 \le n-2,$ we note that the extremal structure for $G'$ is determined in Theorem~\ref{thr:main_extr_h_c} and the conclusion is immediate.
\end{proof}

\section{Computational claims}\label{sec:comp_lemmas}

In this section, we prove that 
$$LHS(m,b,c)=f_c(b) - f_c(0) + b\cdot\left(   f_c\left( \frac{m+\binom{b}{2}}{b}\right) -  f_c\left( \frac{m+\binom{b}{2}}{b}-1\right)\right)$$
    and 
    $$RHS(m,b,c)=f_c(m) - f_c(m - b) + b\cdot(f_c(1) - f_c(0))$$
    satisfy $LHS(m,b,c)<RHS(m,b,c)$ for every $b \ge 2$ and $m \ge \binom{b+1}{2}$ whenever $m \ge 7$ and $c\ge 1$, or $m \ge 4$ and $c\ge 2.$

We do this by means of the following claims.
In Claim~\ref{clm:RHS-LHSincreasing}, we prove that for fixed $b$ and $c$, it is sufficient to prove it for the smallest $m$ in the range.
After that, it is proven in the cases for which $m=\binom{b+1}{2}$ in Claim~\ref{clm:LLvsRL} and for the remaining cases in Claim~\ref{LL>RL_smallm}.

The proofs are mainly computational and there are
alternative computations that lead to the same conclusion.

\begin{claim}\label{clm:RHS-LHSincreasing}
    Fix $b \ge 2$ and $c \ge 1.$
    Then $RHS(m,b,c)-LHS(m,b,c)$ is an increasing function in $m.$
\end{claim}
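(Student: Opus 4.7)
The plan is to treat $m$ as a continuous variable and show that $\frac{d}{dm}\bigl(RHS(m,b,c) - LHS(m,b,c)\bigr) > 0$ on the range $m \ge \binom{b+1}{2}$. First I would differentiate both sides explicitly. Using $f_c(x) = (x+c)\log(x+c)$, so $f_c'(x) = \log(x+c) + 1$, the right-hand side gives
\[
\frac{d}{dm} RHS(m,b,c) \;=\; f_c'(m) - f_c'(m-b) \;=\; \log\!\left(\frac{m+c}{m-b+c}\right).
\]
For the left-hand side, only the term $b \cdot \Delta_c\!\left((m+\binom{b}{2})/b\right)$ depends on $m$. Since $\Delta_c(x) = 1 + \int_{x+c-1}^{x+c} \log t \, \mathrm{d}t$ gives $\Delta_c'(x) = \log(x+c) - \log(x+c-1)$, the chain rule yields
\[
\frac{d}{dm} LHS(m,b,c) \;=\; \Delta_c'\!\left(\frac{m+\binom{b}{2}}{b}\right) \;=\; \log\!\left(\frac{m + \binom{b}{2} + bc}{m + \binom{b}{2} + bc - b}\right).
\]

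Next I would observe that both derivatives have the common form $\log\bigl(y/(y-b)\bigr)$, and that the function $y \mapsto y/(y-b)$ is strictly decreasing on $y > b$, as its derivative equals $-b/(y-b)^2 < 0$. Consequently, the sign of $\frac{d}{dm}(RHS - LHS)$ is determined by comparing the two values of $y$: the derivative is positive iff $m + c < m + \binom{b}{2} + bc$, i.e. iff
\[
\binom{b}{2} + (b-1)c \;>\; 0.
\]
For $b \ge 2$ and $c \ge 1$ this is immediate since $\binom{b}{2} \ge 1$. Positivity on the allowed range of $m$ (where $m - b + c \ge 1$ so the logarithms are well defined and positive) then yields the monotonicity claim.

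I do not anticipate any real obstacle here; the only mildly substantive step is recognising that the two derivatives have the same structural form $\log(y/(y-b))$, after which the comparison collapses to a trivial inequality. The same argument in fact works for any real $c > 0$, which will be convenient when the claim is invoked later.
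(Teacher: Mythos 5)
Your proposal is correct and follows essentially the same route as the paper: differentiate in $m$, obtain $\log\bigl(\tfrac{m+c}{m-b+c}\bigr)-\log\bigl(\tfrac{m+\binom{b}{2}+bc}{m+\binom{b}{2}+bc-b}\bigr)$, and conclude by the monotonicity of $y\mapsto y/(y-b)$ applied with $y=m+c$ and $z=m+\binom{b}{2}+bc$. No differences worth noting.
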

\begin{claimproof}
    We want to prove that the derivative of this quantity with respect to $m$ is positive. To compute the derivative, taking into account the chain rule and
    $\frac{d}{dx}f_c(x)=\log(x+c)+1$, we have that 
    $$\frac{d}{dm}\left( RHS(m,b,c)-LHS(m,b,c) \right)=
    \log\left(\frac{m + c}{m - b + c}\right)-\log\left(\frac{m +\binom{b}{2}+bc}{m +\binom{b}{2}+bc-b}\right)>0.$$
    The inequality now follows the fact
    whenever $0<b<y<z$, we have $\frac{y}{y-b}>\frac{z}{z-b}$. Here it is enough to take $y=m+c$ and $z=m+\binom{b}{2}+bc$.
\end{claimproof}

\begin{claim}\label{clm:LLvsRL}
    Fix $b \ge 2$ and $c \ge 1.$
    Let $$LL(b,c)= (b+1)f_c(b)-f_c(0)-bf_c(b-1)$$ and $$RL(b,c)= f_c \left( \binom{b+1}{2} \right) -f_c\left( \binom{b}{2} \right)  + b\cdot\left(   f_c(1) -  f_c(0)\right).$$ 
    Then $$LL(b,c)<RL(b,c)$$ if $c=1$ and $b \ge 4$, or $c \ge 2$ and $b\ge 3.$
\end{claim}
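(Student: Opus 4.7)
The plan is to show $F(b,c):=RL(b,c)-LL(b,c)>0$ by first rewriting it in a form that makes the positive contributions transparent, then proceeding by induction on $b$ with explicit base cases. Using the telescoping identity $f_c(x)-f_c(y)=\sum_{i=y+1}^{x}\Delta_c(i)$, the difference rewrites as
\[
F(b,c)=\sum_{j=1}^{b}\bigl[\Delta_c\bigl(\binom{b}{2}+j\bigr)-\Delta_c(j)\bigr]-b\bigl[\Delta_c(b)-\Delta_c(1)\bigr].
\]
Since $\Delta_c(x)=1+\int_{x+c-1}^{x+c}\log t\, \mathrm{d}t$ by Definition~\ref{defi:Delta_c}, each summand on the left is an integral of $\log\tfrac{t+c}{t+c-1}$ over an interval of length $\binom{b}{2}$ shifted far to the right, while the correction on the right integrates the same integrand over $[1,b]$. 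Heuristically $F$ is positive because the star grows faster than the clique under edge additions: the single term $\Delta_c(\binom{b+1}{2})-\Delta_c(b)$ is already of order $\log b$, since $\log\binom{b+1}{2}\sim 2\log b$ dwarfs $\log b$.

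For the base cases $b=3$ with $c\ge 2$, and $b=4$ with $c=1$, I would substitute into $F$ and express the result as an integral $\int_{c-1}^{c}\log(P(t)/Q(t))\, \mathrm{d}t$ with explicit polynomials $P,Q$, then verify $P(t)>Q(t)$ on the relevant range. This is exactly the style of the argument used in the proof of Lemma~\ref{lem:cge2_basecases}. Note that $b=3,c=1$ must be excluded since direct evaluation gives $F(3,1)<0$.

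For the inductive step I would show $F(b+1,c)>F(b,c)$, equivalently $\delta F>0$ where after some manipulation
\[
\delta F=\Delta_c\bigl(\binom{b+2}{2}\bigr)+\Delta_c(1)-(b+2)\Delta_c(b+1)+b\Delta_c(b)+\sum_{k=1}^{b}\bigl[\Delta_c\bigl(\binom{b+1}{2}+k\bigr)-\Delta_c\bigl(\binom{b}{2}+k\bigr)\bigr],
\]
and the last sum is nonnegative by monotonicity of $\Delta_c$. The leading term $\Delta_c(\binom{b+2}{2})\sim 2\log b+1$ dominates $(b+2)\Delta_c(b+1)-b\Delta_c(b)=2\Delta_c(b+1)+O(1)\sim 2\log b+O(1)$, while the constant $\Delta_c(1)=(c+1)\log(c+1)-c\log c\ge\log 2$ provides the cushion that closes the comparison.

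The main obstacle is quantitative: the elementary bound $\Delta_c(x)\in[\log(x+c-1)+1,\log(x+c)+1]$ loses approximately $\log 2$ in closing the above comparison, which is exactly the order of the available margin for small $b$. To handle this I would either invoke the sharper estimate $\Delta_c(x)=\log(x+c-\tfrac{1}{2})+1+O(1/(x+c)^2)$ obtained from Taylor expansion around the midpoint, or retain and carefully estimate the paired sum $\sum_{k=1}^{b}[\Delta_c(\binom{b+1}{2}+k)-\Delta_c(\binom{b}{2}+k)]\approx 2$. Either route yields $\delta F>0$ uniformly in $c\ge 1$, and combined with the base cases completes the proof.
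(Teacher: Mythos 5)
Your telescoping identity for $F(b,c)=RL(b,c)-LL(b,c)$ is exactly the rewriting the paper uses, your base-case plan (reduce $F(3,c)>0$ for $c\ge 2$ to an integral $\int_{c-1}^{c}\log(P(t)/Q(t))\,\mathrm{d}t$ and a polynomial comparison, check $F(4,1)>0$ numerically) matches the paper's, and your observation that $F(3,1)<0$ is correct. Your induction on $b$ is the discrete analogue of the paper's appendix argument, which shows $\frac{d}{db}\bigl(RL(b,c)-LL(b,c)\bigr)>0$ and then verifies the same two base cases. So the architecture is sound and close to the paper's.

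The genuine gap is the inductive step: you write down $\delta F$ correctly but then concede that the elementary bound on $\Delta_c$ loses about $\log 2$, ``exactly the order of the available margin,'' and you offer two repairs (a midpoint refinement, or retaining the paired sum) without executing either. This is not a cosmetic omission, because for small $c$ the part you treat as dominant is actually negative: e.g.\ for $c=1$, $b=4$ one has $\Delta_1\bigl(\binom{6}{2}\bigr)+\Delta_1(1)-2\Delta_1(5)\approx-0.28$, while the subtracted term $b\bigl[\Delta_c(b+1)-\Delta_c(b)\bigr]\approx b/(b+c)$ costs a further $0.8$; the paired sum $\sum_{k=1}^{b}\bigl[\Delta_c\bigl(\binom{b+1}{2}+k\bigr)-\Delta_c\bigl(\binom{b}{2}+k\bigr)\bigr]\approx 1.49$ is therefore indispensable, not ``nonnegative and droppable,'' and it only approaches your quoted value $2$ as $b\to\infty$. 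Until you produce an explicit, uniform-in-$c$ lower bound on that sum (or carry out the midpoint expansion with controlled error), $\delta F>0$ is asserted rather than proved, and it is precisely the hard part. For comparison, the paper avoids this tension by splitting the range: for $c\ge 4$ the crude concavity bound $F(b,c)>b\bigl[\Delta_c\bigl(\binom{b+1}{2}\bigr)+\Delta_c(1)-2\Delta_c(b)\bigr]$ already closes, since the bracket is positive exactly when $t>b/(b-2)$, which is where $c\ge 4$ enters; for $c\le 3$ it differentiates in $b$ and decomposes the derivative into three pieces $J_1,J_2,J_3$, each reduced to an explicit polynomial inequality. You would need an analogous explicit computation to make your discrete step rigorous.
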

\begin{claimproof}
    The cases $1\leq c\leq3$ can be verified directly using the formulae: solving numerically the resulting inequalities in the variable $b$, one finds that the inequality holds as long as $b>3.24$, $b>2.53$, and $b>2.34$ for $c=1$, $c=2$, and $c=3$, respectively. A proof that $RL(b,c)-LL(b,c)$ is increasing in $b$, has been put in Appendix~\ref{sec:appendix_comp}, where an alternative strategy for the verification of this claim has been given.
    
     For $c\geq4$ and $b\ge 3$, write
    \begin{align*}
        RL(b,c)&=f_c\left(\binom{b}{2}+b\right)-f_c\left(\binom{b}{2}\right)+b\Delta_c(1)\\
        &=\sum_{i=1}^b\left[f_c\left(\binom{b}{2} +i\right)-f_c\left(\binom{b}{2}+i-1\right)\right]+b\Delta_c(1)\\
        &=\sum_{i=1}^b\Delta_c\left(\binom{b}{2}+i\right)+b\Delta_c(1),
    \end{align*}
    and
    \begin{align*}
        LL(b,c)&=b\left(f_c(b)-f_c(b-1)\right)+f_c(b)-f_c(0)\\
        &=b\Delta_c(b)+f_c(b)-f_c(0)\\
        &=b\Delta_c(b)+\sum_{i=1}^b\left[f_c(i)-f_c(i-1)\right]\\
        &=b\Delta_c(b)+\sum_{i=1}^b\Delta_c(i).
    \end{align*}    
    Then
    \begin{align}
        RL(b,c)-LL(b,c)&=\sum_{i=1}^b\left[\Delta_c\left(\binom{b}{2}+i\right)-\Delta_c(i)\right]-b\left(\Delta_c(b)-\Delta_c(1)\right)\notag\\
        &> b\left[\Delta_c\left(\binom{b}{2}+b\right)-\Delta_c(b)\right]\label{ineq:RL-LL}   -b\left(\Delta_c(b)-\Delta_c(1)\right)\\
        &=b\left[\Delta_c\left(\binom{b+1}{2}\right)+\Delta_c(1)-2\Delta_c(b)\right]\notag  
    \end{align}
    where inequality \eqref{ineq:RL-LL} follows from $b \ge 2$ and the strict concavity of $\Delta_c(x)$. 
    Then, by definition \ref{defi:Delta_c},
    \begin{equation*}
        RL(b,c)-LL(b,c)\geq b \int_{c-1}^{c}\left( \log\left(t+\binom{b+1}{2}\right)+\log(t+1)-2\log(t+b)\right) \d t\\
    \end{equation*}
    For the integral to be positive, it is enough that, for $c-1<t<c$,
    \begin{equation*}
        \left(t+\binom{b+1}{2}\right)(t+1)-(t+b)^2>0,
    \end{equation*}
    which is equivalent to 
    \begin{equation}
        t(b-2)(b-1)>b(b-1).\label{ineq:tb}
    \end{equation}
    Now, since $b\ge 3$, inequality \eqref{ineq:tb} holds if and only if $t>\frac{b}{b-2}$. Furthermore, $b\ge 3$ also implies $\frac{b}{b-2}\leq3$. But $c\geq4$ so $t>c-1=3\geq\frac{b}{b-2}$. Therefore $RL(b,c)>LL(b,c)$ for $c\geq 4$ and $b\ge 3$ as well. 
\end{claimproof}
     
\begin{claim}\label{LL>RL_smallm}
    It is true that $LHS(7, 3, 1) < RHS(7, 3, 1)$ and $LHS(7, 2, 1) < RHS(7, 2, 1).$\\
    For any $c\ge 2$, it is true that $LHS(4, 2, c) < RHS(4, 2, c).$
\end{claim}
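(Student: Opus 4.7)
The claim bundles two specific numerical checks at $c=1$ (with $m=7$ and $b\in\{2,3\}$) together with a one-parameter family at $(m,b)=(4,2)$ for $c\ge 2$. My plan is: for the $c\ge 2$ family, follow the integral-representation strategy of Lemma~\ref{lem:cge2_basecases}; for the two $c=1$ instances, reduce to explicit comparisons of integer products of small prime powers.

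For the $c\ge 2$ family, observe $f_c(4)-f_c(2)=\Delta_c(3)+\Delta_c(4)$ and $f_c(2)-f_c(0)=\Delta_c(1)+\Delta_c(2)$, giving $RHS(4,2,c)-LHS(4,2,c)=\Delta_c(3)+\Delta_c(4)+\Delta_c(1)-\Delta_c(2)-2\Delta_c(5/2)$. Using $\Delta_c(x)=1+\int_{c-1}^{c}\log(x+u)\d u$ (obtained from Definition~\ref{defi:Delta_c} by substituting $t=x+u$), the additive constants sum to $1+1+1-1-2=0$ and
\[
    RHS(4,2,c)-LHS(4,2,c)=\int_{c-1}^{c}\log\frac{(u+1)(u+3)(u+4)}{(u+2)(u+5/2)^{2}}\d u.
\]
Since $c\ge 2$ gives $[c-1,c]\subseteq[1,\infty)$, it suffices to verify the integrand is pointwise positive on $[1,\infty)$, which follows from the polynomial identity $(u+1)(u+3)(u+4)-(u+2)(u+5/2)^{2}=u^{2}+\tfrac{11}{4}u-\tfrac{1}{2}$, an increasing function of $u$ that attains $\tfrac{13}{4}>0$ at $u=1$.

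For the $c=1$ instances I will expand $LHS$ and $RHS$ directly using $f_1(x)=(x+1)\log(x+1)$ and collect logarithms. Simplifying with $\log 4=2\log 2$, $\log 8=3\log 2$, $\log 6=\log 2+\log 3$, $\log(13/3)=\log 13-\log 3$, $\log(10/3)=\log 2+\log 5-\log 3$, the two differences collapse to $\mathbb{Z}$-linear combinations of logarithms at small primes:
\[
    RHS(7,3,1)-LHS(7,3,1) = \log\frac{2^{32}\cdot 3^{3}\cdot 5^{5}}{13^{13}},\qquad RHS(7,2,1)-LHS(7,2,1) = \log\frac{2^{38}}{3^{9}\cdot 5^{10}}.
\]
Each is positive iff the corresponding numerator exceeds the denominator, which I will verify by direct arithmetic: $2^{32}\cdot 3^{3}\cdot 5^{5}\approx 3.624\cdot 10^{14}$ versus $13^{13}\approx 3.028\cdot 10^{14}$ (ratio $\approx 1.20$), and $2^{38}\approx 2.749\cdot 10^{11}$ versus $3^{9}\cdot 5^{10}\approx 1.922\cdot 10^{11}$ (ratio $\approx 1.43$).

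The main obstacle is that the clean integral-reduction does not extend to $c=1$: the domain shrinks to $[0,1]$ and the analogous log-ratio polynomial turns negative near $u=0$ (for $(7,3,1)$ the integrand at $u=0$ equals $\log(5670/6000)\approx -0.057$), so no pointwise estimate is available. The integer-product reduction sidesteps this by avoiding any pointwise analysis entirely and replacing it with a finite arithmetic check.
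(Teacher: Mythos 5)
Your proposal is correct and follows essentially the same route as the paper: the $c\ge 2$ case is handled identically (reduction to $\Delta_c(4)+\Delta_c(3)+\Delta_c(1)>2\Delta_c(5/2)+\Delta_c(2)$ and a pointwise polynomial comparison of the integrands on $[1,\infty)$, and your expansion $u^2+\tfrac{11}{4}u-\tfrac12$ checks out), while the two $c=1$ instances are, as in the paper, settled by direct evaluation. The only difference is that where the paper simply reports the approximate values $0.18$ and $0.36$, you reformulate these as the exact integer comparisons $2^{32}\cdot 3^3\cdot 5^5>13^{13}$ and $2^{38}>3^9\cdot 5^{10}$ (both of which are correct and consistent with those decimal values), which is a slightly more rigorous finish to the same computation rather than a different argument.
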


\begin{claimproof}
    The values $RHS(7, 3, 1)-LHS(7, 3, 1)$ and $ RHS(7, 2, 1)-LHS(7, 2, 1)$ are approximately $0.18$ and $0.36$ respectively and thus $LHS(7, 3, 1) < RHS(7, 3, 1)$ and $LHS(7, 2, 1) < RHS(7, 2, 1).$
    The second inequality is equivalent to
    $$\Delta_c(4)+\Delta_c(3)+\Delta_c(1) > 2\Delta_c(2.5)+\Delta_c(2).$$

    This is true for every $c \ge 2$ since
    \begin{align*}
            \int_{c-1}^{c} \log\left( (t+4)(t+3)(t+1) \right) \d t >\int_{c-1}^{c}  \log\left( \left(t+\frac 52 \right)^2(t+2) \right) \d t,
        \end{align*}
        as $(t+4)(t+3)(t+1)>\left(t+\frac 52 \right)^2(t+2)$ for every $t \ge 1.$
\end{claimproof}

\bibliographystyle{abbrv}
\bibliography{bib_entropy}

\begin{thebibliography}{1}

\bibitem{bonchev1983chemical}
D.~Bonchev.
\newblock {\em Information Theoretic Indices for Characterization of Chemical
  Structures}.
\newblock Research Studies Press, Chichester, 1983.

\bibitem{CM22+}
S.~{Cambie} and M.~{Mazzamurro}.
\newblock {Minimum entropy of graphs with given size}.
\newblock {\em arXiv e-prints}, page arXiv:2204.08251, Apr. 2022.

\bibitem{csardi2006igraph}
G.~Csardi and T.~Nepusz.
\newblock The igraph software package for complex network research.
\newblock {\em InterJournal}, Complex Systems:1695, 2006.
\newblock \url{https://igraph.org}.

\bibitem{dehmer2012extremal}
M.~Dehmer and V.~Kraus.
\newblock On extremal properties of graph entropies.
\newblock {\em MATCH Commun. Math. Comput. Chem}, 68(3):889--912, 2012.

\bibitem{dehmer2009entropy}
M.~Dehmer, K.~Varmuza, S.~Borgert, and F.~Emmert-Streib.
\newblock On entropy-based molecular descriptors: Statistical analysis of real
  and synthetic chemical structures.
\newblock {\em Journal of chemical information and modeling}, 49(7):1655--1663,
  2009.

\bibitem{hankin2021partitions}
R.~K.~S. Hankin and P.~Egeler.
\newblock {\em partitions: Additive Partitions of Integers}, 2021.
\newblock R package version 1.10-4,
  \url{https://CRAN.R-project.org/package=partitions}.

\bibitem{mowshowitz1968entropy}
A.~Mowshowitz.
\newblock Entropy and the complexity of graphs: I. an index of the relative
  complexity of a graph.
\newblock {\em The bulletin of mathematical biophysics}, 30(1):175--204, 1968.

\bibitem{sierksma1991seven}
G.~Sierksma and H.~Hoogeveen.
\newblock Seven criteria for integer sequences being graphic.
\newblock {\em Journal of Graph theory}, 15(2):223--231, 1991.

\bibitem{yan2021topological}
J.~Yan.
\newblock Topological structure of extremal graphs on the first degree--based
  graph entropies.
\newblock {\em MATCH Commun. Math. Comput. Chem.}, 85(2):275--284, 2021.

\end{thebibliography}

\appendix
\section{Codes}\label{sec:appendix_code}
The following R code can be used to prove Lemma \ref{lem:c=1_basecases}.

\begin{lstlisting}
#load necessary packages
library(partitions)
library(igraph)

#define function h_c
h_c <- function(x,c){
  #avoid issues with log(0)
  if(c==0){
    x[x==0] <- 1
  }
  ans <- sum((x+c)*log(x+c))
  return(ans)
}

#allow tolerance in the comparison to account for machine precision
tol<-1e-6

#initialize empty list of degree sequences
h_c_max <- list()

for(m in 3:10){
  #find partitions of 2m 
  parts_m <- as.matrix(parts(2*m))
  #select partitions that are valid degree sequences
  is_graphical_m <- apply(parts_m,MARGIN=2,is_graphical)
  graphical_parts_m <- as.matrix(parts_m[,is_graphical_m])
  #find which degree sequence(s) maximize(s) h_c
  h_c_m <- apply(graphical_parts_m,MARGIN=2,h_c,c=1)
  h_c_m_max <- graphical_parts_m[,which((max(h_c_m)-h_c_m)<tol)]
  #add the degree sequence(s) maximizing h_c_m to the list
  h_c_max <- append(h_c_max,list(h_c_m_max))
}
\end{lstlisting}\label{code:h1_small_m}

\section{Precise verification of Claim~\ref{clm:LLvsRL}}\label{sec:appendix_comp}

For easy reference, we restate Claim~\ref{clm:LLvsRL} here.

\begin{claim}\label{clm:LLvsRL_v2}
    Fix $b \ge 2$ and $c \ge 1.$
    Let $$LL(b,c)= (b+1)f_c(b)-f_c(0)-bf_c(b-1)$$ and $$RL(b,c)= f_c \left( \binom{b+1}{2} \right) -f_c\left( \binom{b}{2} \right)  + b\cdot\left(   f_c(1) -  f_c(0)\right).$$ 
    Then $$LL(b,c)<RL(b,c)$$ if $c=1$ and $b \ge 4$, or $c \ge 2$ and $b\ge 3.$
\end{claim}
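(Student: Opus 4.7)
The plan is to establish the claim by showing that, for every fixed $c \geq 1$, the quantity $D(b,c) := RL(b,c) - LL(b,c)$ is strictly increasing in the integer variable $b$, and then verifying the finitely many base cases $D(4,1)$, $D(3,2)$, $D(3,3) > 0$ by direct numerical evaluation. Together with the analytic argument already given in the main text for $c \geq 4$ (based on concavity of $\Delta_c$), this covers every $(b,c)$ appearing in the statement.

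To prove monotonicity, I begin from the $\Delta_c$-decomposition derived in the main proof,
$$D(b,c) = \sum_{i=1}^b \Big[\Delta_c\big(\tbinom{b}{2}+i\big) - \Delta_c(i)\Big] + b\big(\Delta_c(1) - \Delta_c(b)\big).$$
Using $\binom{b+1}{2} = \binom{b}{2} + b$ to align summation indices, the forward difference rearranges into
$$D(b+1,c) - D(b,c) = \Delta_c\big(\tbinom{b+2}{2}\big) + \sum_{i=1}^b\Big[\Delta_c\big(\tbinom{b+1}{2}+i\big) - \Delta_c\big(\tbinom{b}{2}+i\big)\Big] + \Delta_c(1) - (b+2)\Delta_c(b+1) + b\Delta_c(b).$$
Invoking the integral representation $\Delta_c(x) = 1 + \int_{c-1}^{c} \log(x+t)\,\d t$, the ``$+1$'' contributions cancel exactly (one can check the coefficients sum to zero), and $D(b+1,c) - D(b,c)$ becomes a single integral $\int_{c-1}^c \log P(t,b)\,\d t$, where $P(t,b)$ is an explicit rational function of $t$ whose numerator and denominator each carry $b+2$ linear factors.

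The main obstacle is verifying $P(t,b) > 1$ for $t \geq 0$ and $b \geq 3$. I would organise the comparison by pairing each numerator factor $t + \binom{b+1}{2} + i$ with the corresponding denominator factor $t + \binom{b}{2} + i$ for $i = 1,\dots,b$, yielding paired ratios $1 + \frac{b}{t + \binom{b}{2}+i}$ whose product can be controlled via $\log(1+x) \geq \frac{x}{1+x}$, and then balancing the ``leading'' leftover factor $t + \binom{b+2}{2}$ against the $(t+b+1)^{b+2}/(t+b)^b$ contribution coming from $\Delta_c(b+1), \Delta_c(b)$ using concavity of $\log$. Since the positivity gap is known to be very slim at small $b$ and $t = 0$ (a direct check at $t=0, b=3$ shows $P \approx 1.11$), the pairing must be chosen carefully to avoid losing the margin. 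Once monotonicity is secured, the base cases $D(4,1), D(3,2), D(3,3)$ are evaluated numerically in the same spirit as Claim~\ref{LL>RL_smallm}, which confirms positivity and completes the argument.
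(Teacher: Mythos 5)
Your overall architecture mirrors the paper's appendix proof --- show that $RL(b,c)-LL(b,c)$ grows with $b$ and then check a few base values --- except that you work with the forward difference $D(b+1,c)-D(b,c)$ instead of the derivative in $b$. The difference formula you write down is correct, the constant terms from $\Delta_c(x)=1+\int_{c-1}^{c}\log(x+t)\d t$ do cancel, and reducing everything to $\int_{c-1}^{c}\log P(t,b)\d t$ with $P$ a ratio of $2b+2$ linear factors on each side (not $b+2$; a minor miscount) is a legitimate reformulation. The base-case bookkeeping also closes: with the main text handling $c\ge 4$ and $b\ge 3$, you only need monotonicity in $b$ together with the three numerical checks $D(4,1),D(3,2),D(3,3)>0$.

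The gap is that the one step carrying all the difficulty --- showing $P(t,b)>1$ on the integration interval --- is never proved; you describe a pairing strategy and concede it ``must be chosen carefully,'' but as described it fails. Concretely, at $b=3$, $t=0$ the total margin is $\log P\approx\log(1.107)\approx 0.102$, while bounding the three paired ratios $1+\tfrac{3}{4}$, $1+\tfrac{3}{5}$, $1+\tfrac{3}{6}$ via $\log(1+x)\ge\frac{x}{1+x}$ replaces their true contribution $\log(4.2)\approx 1.435$ by roughly $1.137$, a loss of about $0.30$ --- three times the available margin --- so the resulting lower bound on $\log P$ is negative and the argument collapses exactly where it is tightest. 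You would need either an essentially lossless treatment of the product (for instance an exact polynomial comparison of numerator against denominator, which is what the paper does when it factors its $J_1$, $J_2$, $J_3$ terms in the $b$-derivative into explicit positive polynomials) or a different grouping of factors; until one of these is carried out, the monotonicity claim, and hence the proof, is not established.
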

\begin{claimproof}
     We first prove that the derivative of this quantity with respect to $b$ is positive.
     We have
     \begin{align*}
         \frac{d}{db} RL(b,c) =&\frac{2b+1}{2}\left( \log\left(\binom{b+1}{2} +c \right)+1\right)\\&-\frac{2b-1}{2}\left( \log\left(\binom{b}{2} +c \right)+1\right)+f_c(1)-f_c(0)\\
         \frac{d}{db} LL(b,c) =&(b+1)\left( \log\left(b+c \right)+1\right)\\&-b\left( \log\left(b+c-1 \right)+1\right)+f_c(b)-f_c(b-1)
     \end{align*}
     Combining these two expression, we can write
     \begin{equation}
         \frac{d}{db}\left( RL(b,c)-LL(b,c) \right)=J_1(b,c)+J_2(b,c)+J_3(b,c),
     \end{equation}\label{eq:RL-LL}
     where 
     \begin{align*}
         J_1(b,c)=&\frac{2b-1}{2}\left( \log\left(\binom{b+1}{2} +c \right) -\log\left(\binom{b}{2} +c \right)  -2\log\left(b+c \right)+2\log\left(b+c-1 \right) \right),\\
         J_2(b,c)=&c \left( \log(c+1)-\log(c)-\log(b+c)+\log(b+c-1) \right),\\
         J_3(b,c)=&\log\left(\binom{b+1}{2} +c \right) +\log(c+1) -2\log(b+c).
     \end{align*}
     It is sufficient to prove that $J_i(b,c)\geq0$ for $i\in\{1,2,3\}$ for the above conditions on $b$ and $c$.
      
      By expanding the binomial coefficient, we rewrite $J_1(b,c)$ as 
      \begin{equation*}
          J_1(b,c)=\frac{2b-1}{2}\log\left(\frac{(b^2+b+2c)(b+c-1)^2}{(b^2-b+2c)(b+c)^2}\right).
      \end{equation*}
     Then $J_1(b,c)>0$ if and only if $g(b,c)>0$, where 
     \begin{equation*}
         g(b,c)=(b^2+b+2c)(b+c-1)^2-(b^2-b+2c)(b+c)^2.
     \end{equation*}
     This can be simplified as
     \begin{equation*}
          g(b,c)=2(b-3)bc+2(b-2)c^2+b+2c-b^2.
     \end{equation*}
     Hence we can see that, whenever $b\geq 3$,
    \begin{equation*}
          \frac{\partial}{\partial c}g(b,c)=4(b-2)c+2(b-3)b+2>0,
     \end{equation*}
    and so $g(b,c)$ is a strictly increasing function in $c$ whenever $b\geq 3$. 
    
    For $c=1$ we have that ${g(b,1)=b^2-3b-2>0}$ for $b \ge 4$.
    Similarly, for $c=2$, we get ${g(b,2)=3b^2-3b-12>0}$ for $b \ge 3$. Thus $g(b,c)>0$ (and hence $J_1(b,c)>0$) for $c=1$ and $b\geq 4$, and for $c\geq 2$ and $b\geq 3$.
     
     Analogous arguments apply to $J_2$ and $J_3$.
     
     $J_2(b,c)>0$ since $\log\left(1+\frac 1{c} \right) > \log\left(1+\frac 1{b+c-1} \right)$ when $b-1>0$.
     
     For $J_3(b,c)$ note that 
     \begin{equation*}
         2\left(\binom{b+1}{2} +c \right)(c+1)-2(b+c)^2= (bc-2c-b)(b-1),
     \end{equation*}
     which is non-negative whenever $b,c\ge 3$. 
     
     Combining these results and substituting in equation \eqref{eq:RL-LL}, we have ${\frac{d}{db}\left( RL(b,c)-LL(b,c) \right)>0}$ for $c\geq 3$. 
     
    For the remaining cases, consider the sum $J_2(b,c)+J_3(b,c)$. When $c=1,$
     \begin{equation*}
         J_2(b,1)+J_3(b,1)=\log\left(\frac{2b(b^2+b+2)}{(b+1)^3}\right).
     \end{equation*}
     Now, 
     \begin{equation*}
         2b(b^2+b+2)-(b+1)^3=(b-1)(b^2+1),
     \end{equation*}
     which is positive for $b>1$.
     Similarly,
      \begin{equation*}
         J_2(b,2)+J_3(b,2)=\log\left(\frac{27(b^2+b+4)(b^2+2b+1)}{8(b^2+4b+4)^2}\right),
     \end{equation*}
     and
     \begin{equation*}
         27(b^2+b+4)(b^2+2b+1)-8(b^2+4b+4)^2=(b-1)(19b^3+36b^2+33b+20),
     \end{equation*}
     which is positive for $b>1$.
     
     Hence $J_2(b,c)+J_3(b,c)>0$ when $b\geq 2$ and $c=1$ or $c=2$. Combining with the results for $J_1(b,c)$, and substituting in equation \eqref{eq:RL-LL}, we conclude that ${\frac{d}{db}\left( RL(b,c)-LL(b,c) \right)>0}$ for $c=1$ and $b\geq 4$, and $c=2$ and $b\geq3$ as well.

     Since the derivative is positive in all cases, it is sufficient to prove that $RL(4,1)>LL(4,1)$ and $RL(3,c)>LL(3,c).$
     The first one can be verified directly, a computation shows that $RL(4,1)-LL(4,1)\sim 0.245$ and thus $RL(4,1)>LL(4,1)$.
    The second inequality, $RL(3,c)>LL(3,c)$, is equivalent to
    $$\Delta_c(6)+\Delta_c(5)+\Delta_c(4)+2\Delta_c(1) > 4\Delta_c(3)+\Delta_c(2).$$
    This is true for every $c \ge 2$ since
    \begin{align*}
            \int_{c-1}^{c} \log\left( (t+6)(t+5)(t+4)(t+1)^2 \right) \d t >\int_{c-1}^{c}  \log\left( (t+3 )^4(t+2) \right) \d t,
        \end{align*}
        as $(t+6)(t+5)(t+4)(t+1)^2>(t+3)^4(t+2)$ for every $t \ge 1.$
\end{claimproof}

\end{document}